\newtheorem{theorem}{Theorem}
\newtheorem{lemma}[theorem]{Lemma}
\newenvironment{proof}[1][Proof]{\textbf{#1.} }{\ \rule{0.5em}{0.5em}}
\begin{document}

\title{Harmonic vectors and matrix tree theorems}
\author{Siddhartha Sahi \\
Mathematics Department, Rutgers University, New Brunswick, NJ}
\maketitle

\section{Introduction}

In this paper we prove a new result in graph theory that was motivated by
considerations in mathematical economics; more precisely by the problem of
price formation in an exchange economy \cite{DS}. The aggregate
demand/supply in the economy is described by an $n\times n$ matrix $A=\left(
a_{ij}\right) $ where $a_{ij}$ is the amount of commodity $j$ that is on
offer for commodity $i$. In this context one defines a \emph{market-clearing}
price vector to be a vector $p$ with strictly positive components $p_{i}$,
which satisfies the equation%
\begin{equation}
\sum_{j}a_{ij}p_{j}=\sum_{j}a_{ji}p_{i}\text{ for all }i  \label{=price}
\end{equation}%
The left side of (\ref{=price}) represents the total value of all
commodities being offered for commodity $i$, while the right side represents
the total value of commodity $i$ in the market. It was shown in \cite{DS}
that if the matrix $A$ is irreducible, \emph{i.e.} if it cannot be permuted
to block upper-triangular form, then (\ref{=price}) admits a positive
solution vector $p$, which is unique up to a positive multiple.

The primary purpose of the present paper is to describe an explicit
combinatorial formula for $p$. The formula and its proof are completely
elementary, but nonetheless the result seems to be new. This formula plays a
crucial role in forthcoming joint work of the author \cite{DSS}, which seeks
to address a fundamental question in mathematical economics: \emph{How do
prices and money emerge in a barter economy?} We show in \cite{DSS} that
among a reasonable class of exchange mechanisms, trade via a commodity
money, even in the absence of transactions costs, minimizes complexity in a
very precise sense.

It turns out however that equation (\ref{=price}) is closely related to
well-studied problems in graph theory, in particular to the so-called matrix
tree theorems. Therefore as an additional application of our formula, we
give an elementary proof of the matrix tree theorem of W. Tutte \cite{T},
which was independently discovered by R. Bott and J. Mayberry \cite{B}
coincidentally also in an economic context. With a little additional effort,
we also obtain a short new proof of S. Chaiken's generalization of the
matrix tree theorem \cite{C}.

\section{Harmonic vectors}

We first give a slight reformulation and reinterpretation of equation (\ref%
{=price}) in standard graph-theoretic language. Let $G$ be a simple directed
graph (digraph) on the vertices $1,2,\ldots ,n$, with weight $a_{ij}$
attached to the edge $ij$ from $i$ to $j$. The weighted \emph{adjacency}
matrix of $G$ is the $n\times n$ matrix $A=\left( a_{ij}\right) ,$ where $%
a_{ij}=0$ for missing edges. The \emph{degree matrix} $D$ is the diagonal
matrix with diagonal entries $\left( d_{1},\ldots ,d_{n}\right) $, where $%
d_{i}$ is the \emph{in-degree} $\sum_{j}a_{ji}$ of the vertex $i$. The \emph{%
Laplacian} of $G$ is the matrix $L=D-A$ and we say that a vector $\mathbf{x}%
=\left( x_{i}\right) $ is \emph{harmonic} if $\mathbf{x}$ is a null vector
of $L,$ \textit{i.e.} if it satisfies 
\begin{equation}
L\mathbf{x}=\mathbf{0.}  \label{=harmonic}
\end{equation}%
It is easy to see that equation (\ref{=price}) is equivalent to equation (%
\ref{=harmonic}), \textit{i.e. }the market-clearing condition is the same as
harmonicity of $p.$

To describe our construction of a harmonic vector, we introduce some
terminology. A \emph{directed tree}, also known as an \emph{arborescence},
is a digraph with at most one incoming edge $ij$ at each vertex $j$, and
whose underlying undirected graph is acyclic and connected (\textit{i.e.} a
tree). Following the edges backwards from any vertex we eventually arrive at
the same vertex called the \emph{root}. Dropping the connectivity
requirement leads to the notion of a \emph{directed forest,} which is simply
a vertex-disjoint union of directed trees. We define a \emph{dangle} to be a
digraph $D$ that is an edge-disjoint union of a directed forest $F$ and a
directed cycle $C$ linking the roots of $F$; note that $D$ determines $C$
and $F$ uniquely, the former as its unique simple cycle.

In the context of the digraph $G$, we will use the term $i$-tree to mean a
directed spanning tree of $G$ with root $i$, and $i$-dangle to mean a
spanning dangle whose cycle contains $i$. We define the weight $wt\left(
\Gamma \right) $ of a subgraph $\Gamma $ of $G$ to be the product of weights
of all the edges of $\Gamma $, and we define the \emph{weight vector} of $G$
to be $\mathbf{w}=\left( w_{i}\right) $ where $w_{i}$ is the weighted sum of
all $i$-trees.

\begin{theorem}
\label{weight}The weight vector of a digraph is harmonic.
\end{theorem}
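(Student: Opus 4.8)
The plan is to show that every component of $L\mathbf{w}$ vanishes by interpreting both terms of $(L\mathbf{w})_i = d_i w_i - \sum_j a_{ij} w_j$ as one and the same quantity, namely the total weight $W_i = \sum_{D} wt(D)$ of all $i$-dangles $D$. The dangle is exactly the bookkeeping device suggested by the definitions: a spanning subgraph in which every vertex has in-degree one has $n$ edges, and it is connected if and only if it is a dangle, its unique cycle being traced out by following incoming edges backwards. Moreover adjoining a single edge to a spanning tree produces precisely such an in-degree-one subgraph, and this is the mechanism that will convert trees into dangles.

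First I would treat the term $d_i w_i$. Since the in-degree is $d_i = \sum_k a_{ki}$, this term expands as $\sum_k a_{ki} \sum_T wt(T)$, a sum over pairs $(T, ki)$ consisting of an $i$-tree $T$ and an edge $ki$ directed into $i$. Because the root $i$ of an $i$-tree has in-degree zero, the edge $ki$ is never already present in $T$, so $T + ki$ is a spanning subgraph with $n$ edges in which every vertex has in-degree one; being connected it is a dangle, and its cycle passes through $i$ (the new edge $ki$ together with the backward path from $k$ to the root closes a loop at $i$), so it is an $i$-dangle with $ki$ as the cycle-edge entering $i$. Conversely, given an $i$-dangle $D$, deleting the unique cycle-edge entering $i$ restores in-degree zero at $i$ and returns an $i$-tree. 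This is a weight-preserving bijection, with $wt(D) = a_{ki}\, wt(T)$, so $d_i w_i = W_i$.

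Next I would treat the term $\sum_j a_{ij} w_j = \sum_j a_{ij} \sum_T wt(T)$, now a sum over pairs $(T, ij)$ with $T$ a $j$-tree and $ij$ an edge directed out of $i$. Exactly as before, $ij$ cannot already lie in the $j$-tree $T$, and $T + ij$ is a connected spanning subgraph of in-degree one, hence a dangle; its cycle runs $\cdots \to i \to j \to \cdots$, so it contains $i$ and is again an $i$-dangle, this time with $ij$ marked as the cycle-edge leaving $i$. The inverse map deletes the unique cycle-edge leaving $i$ from an $i$-dangle, which strips the incoming edge from the next cycle vertex $j$ and yields a $j$-tree. This gives a second weight-preserving bijection, whence $\sum_j a_{ij} w_j = W_i$ as well.

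Combining the two identities yields $(L\mathbf{w})_i = d_i w_i - \sum_j a_{ij} w_j = W_i - W_i = 0$ for every $i$, which is the claim. The only real content is the recognition that an $i$-dangle can be cut canonically in two ways, at its cycle-edge into $i$ or at its cycle-edge out of $i$, and that these two cuts enumerate, respectively, the pairs counted by $d_i w_i$ and by $\sum_j a_{ij} w_j$. I expect the point needing the most care to be verifying that $T$ plus one edge is always connected and unicyclic and that the marked cycle-edge at $i$ is unique, which holds because the cycle of a dangle is simple and meets $i$ exactly once; the degenerate cases where no $i$-trees or no $i$-dangles exist need no separate argument, since both sides are then zero.
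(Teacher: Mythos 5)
Your proof is correct and follows essentially the same route as the paper: both arguments identify $d_i w_i$ and $\sum_j a_{ij} w_j$ with the weighted sum of $i$-dangles via the two ways of cutting (or, in your direction, forming) the cycle at $i$ — deleting the incoming cycle-edge to get an $i$-tree and the outgoing one to get a $j$-tree. You simply run the bijection from the tree side rather than the dangle side, and you spell out the connectivity and uniqueness checks that the paper leaves implicit.
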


\begin{proof}
If $\Gamma $ is an $i$-dangle in $G$ with cycle $C$, and $ij$ and $ki$ are
the unique outgoing and incoming edges at $i$ in $C,$ then deleting one of
these edges from $\Gamma $ gives rise to an $j$-tree and a $i$-tree,
respectively. The dangle can be recovered uniquely from each of the two
trees by reconnecting the respective edges; thus, writing $\mathcal{T}_{i}$
for the set of $i$-trees, we obtain bijections from the set of $i$-dangles
to each of the following sets%
\begin{equation*}
\left\{ \left( ij,t\right) :t\in \mathcal{T}_{j}\right\} ,\quad \left\{
\left( ki,t\right) :t\in \mathcal{T}_{i}\right\} .
\end{equation*}%
where $ij$ and $ki$ range over all outgoing and incoming edges at $i$ in $G$.

Thus if $v_{i}$ is the weighted sum of all $i$-dangles, we get 
\begin{equation*}
\sum\nolimits_{j}a_{ij}w_{j}=v_{i}=\sum\nolimits_{k}a_{ki}w_{i}.
\end{equation*}%
Rewriting this we get $A\mathbf{w}=D\mathbf{w}$, and hence $\left(
D-A\right) \mathbf{w}=\mathbf{0,}$ as desired.
\end{proof}

\section{The matrix tree theorem}

In this section we use Theorem \ref{weight} to derive the \emph{matrix tree
theorem} due to \cite{T} (see also \cite{B}). This is the following formula
for the cofactors of the Laplacian $L$, which generalizes a classical
formula of Kirchoff for the number of spanning trees in an undirected graph.

\begin{theorem}
\label{matrix tree} The $ij$-th cofactor of the Laplacian $L$ is given by%
\begin{equation*}
c_{ij}\left( L\right) =\sum\nolimits_{t\in \mathcal{T}_{j}}wt\left( t\right) 
\text{ for all }i,j\text{.}
\end{equation*}
\end{theorem}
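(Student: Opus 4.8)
The plan is to peel the statement down to a single principal minor and then evaluate that one determinant combinatorially. First I would record the basic structural fact that every column of $L$ sums to zero: since $L_{ij}=d_i\delta_{ij}-a_{ij}$ and $d_j=\sum_i a_{ij}$ is precisely the in-degree of $j$, the $j$-th column sums to $d_j-\sum_i a_{ij}=0$, so $\mathbf 1^{T}L=0$. I claim this already forces the cofactors to be constant down each column, that is, $c_{ij}(L)=c_{jj}(L)$ for every $i$.

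To justify the claim, fix $j$ and let $N$ be the $n\times(n-1)$ matrix obtained from $L$ by deleting column $j$. Its columns are columns of $L$, hence still sum to zero, so $N^{T}\mathbf 1=\mathbf 0$. The alternating vector of maximal minors $u$, with $u_i=(-1)^{i}\det N_{\hat\imath}$ (delete row $i$), is orthogonal to every column of $N$ and therefore lies in $\ker N^{T}$. If $\operatorname{rank}N=n-1$ this kernel is the line $\mathbb R\mathbf 1$, so $u$ is a constant vector; if $\operatorname{rank}N<n-1$ then every maximal minor vanishes and $u=\mathbf 0$. In either case $(-1)^{i}\det N_{\hat\imath}$ is independent of $i$, whence $c_{ij}=(-1)^{i+j}\det N_{\hat\imath}$ is independent of $i$ as well. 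This uses no irreducibility hypothesis, and it reduces the theorem to the single identity $c_{jj}=w_j$.

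It then remains to show $c_{jj}=\det L^{(j,j)}=w_j$, where $L^{(j,j)}$ deletes row and column $j$. Here I would expand $\det L^{(j,j)}$ by the Leibniz formula and expand each diagonal in-degree $d_i=\sum_m a_{mi}$ into its incoming edges; the determinant becomes a signed, weighted sum over sub-digraphs of $G$ in which every vertex of $V\setminus\{j\}$ contributes exactly one edge. Such a sub-digraph either is a spanning arborescence rooted at $j$ or contains a directed cycle inside $V\setminus\{j\}$. The hard part will be to produce a sign-reversing, weight-preserving involution that cancels all the cyclic configurations — for instance by pairing a configuration with the one in which the edges of its lexicographically least directed cycle switch between their diagonal and off-diagonal roles — leaving exactly the $j$-trees, each with weight $wt(t)$ and an overall $+$ sign, so that $\det L^{(j,j)}=\sum_{t\in\mathcal T_j}wt(t)=w_j$. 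A deletion--contraction induction on a single edge is an equally good route to this same identity.

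Finally, it is worth seeing how Theorem \ref{weight} enters. The Laplace identity $\sum_j L_{ij}c_{kj}=(\det L)\delta_{ik}=0$, together with the column-constancy $c_{kj}=\gamma_j$, shows that the cofactor vector $\gamma=(\gamma_1,\dots,\gamma_n)$ is harmonic, $L\gamma=\mathbf 0$ — exactly the property Theorem \ref{weight} establishes for $\mathbf w$. When $G$ is irreducible the harmonic space is one-dimensional, so $\gamma$ and $\mathbf w$ are proportional and only the single normalization $c_{jj}=w_j$ is needed to conclude; the combinatorial evaluation above is what makes the argument work in general and fixes the constant of proportionality to be $1$.
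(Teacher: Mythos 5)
Your proposal is sound in outline but follows a genuinely different route from the paper. You and the paper agree on the first step: the zero column sums force $c_{ij}(L)$ to be independent of $i$ (the paper proves this by replacing rows $i$ and $i+1$ of $L$ by their sum and observing the resulting matrix again has zero column sums; your argument via the vector of signed maximal minors lying in $\ker N^{T}$ is an equally valid alternative). After that the two proofs diverge completely. You reduce to the single principal cofactor $c_{jj}=\det L^{(j,j)}$ and propose to evaluate it by the classical Leibniz expansion plus a sign-reversing involution cancelling the configurations that contain a directed cycle in $V\setminus\{j\}$. The paper never expands this determinant: it invokes Theorem \ref{weight} to get a second harmonic vector $\mathbf{w}$ alongside the cofactor vector $\mathbf{c}$, passes to the complete digraph with the $a_{ij}$ as indeterminates so that the null space of $L$ is one-dimensional over $\mathbb{C}(a_{ij})$, and then pins down the constant of proportionality by showing each $c_{j}$ is an irreducible polynomial (via a linear change of variables to the irreducibility of the generic determinant), comparing total degrees, and matching the coefficient of the single monomial $\prod_{j\neq i}a_{ij}$. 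Your closing paragraph essentially rediscovers this strategy but then sets it aside in favor of the direct evaluation. The trade-off is clear: your route is self-contained and classical, but the entire difficulty of the theorem is concentrated in the involution, which you only sketch --- you would need to verify that toggling the edges of the least cycle between their ``diagonal'' and ``off-diagonal'' roles is a well-defined, weight-preserving, sign-reversing involution fixing exactly the arborescences, and that the surviving terms all carry sign $+1$; this is precisely the work the paper's Theorem \ref{weight} (the dangle bijection) is designed to replace. Conversely, the paper's route buys the theorem almost for free from harmonicity, at the price of the algebraic input on irreducibility of the determinant. If you carry out the involution carefully (or substitute the deletion--contraction induction you mention), your proof is complete and, unlike the paper's, avoids passing to generic edge weights.
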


We will prove this in a moment after some discussion on cofactors.

\subsection{Interlude on cofactors}

We recall that $ij$-th cofactor of an $n\times n$ matrix $X$ is 
\begin{equation*}
c_{ij}\left( X\right) =\left( -1\right) ^{i+j}\det X_{ij},
\end{equation*}%
where $X_{ij}$ is the matrix obtained from $X$ by deleting row $i$ and
column $j$. The \emph{adjoint} of $X$ is the $n\times n$ matrix $\limfunc{adj%
}\left( X\right) $ whose $ij$-th entry is $c_{ji}\left( X\right) $.

\begin{lemma}
If $\det X=0$ then the columns of $\limfunc{adj}\left( X\right) $ are null
vectors of $X$; moreover these are the \emph{same} null vector if the
columns of $X$ sum to $0$.
\end{lemma}

\begin{proof}
By standard linear algebra we have $X\limfunc{adj}\left( X\right) =\det
\left( X\right) I_{n}$. If $\det X=0$ then $X\limfunc{adj}\left( X\right) $
is the zero matrix, which implies the first part. For the second part we
note that if $X$ has zero column sums then necessarily $\det X=0.$ In view
of the first part it suffices to show that $c_{ij}\left( L\right)
=c_{i+1,j}\left( L\right) $ for all $i,j$; or equivalently that%
\begin{equation*}
\det \left( L_{ij}\right) +\det \left( L_{i+1,j}\right) =0.
\end{equation*}%
The left side above equals $\det P$, where $P$ is obtained from $L$ by
deleting column $j$ and replacing rows $i$ and $i+1$ by the single row
consisting of their sum. But $P$ too has zero column sums, and so $\det P=0$.
\end{proof}

\subsection{Proof of the matrix tree theorem}

\begin{proof}
It suffices to prove Theorem \ref{matrix tree} for the complete simple
digraph $G_{n}$ on $n$ vertices, with edge weights $\left\{ a_{ij}\mid i\neq
j\right\} $ regarded as variables, and we work over the field of rational
functions $\mathbb{C}\left( a_{ij}\right) $. The Laplacian $L$ has zero
column sums by construction,and so by the previous lemma, $%
c_{j}:=c_{ij}\left( L\right) $ is independent of $i$ and the vector $\mathbf{%
c=}\left( c_{1},\ldots ,c_{n}\right) ^{t}$ is a null vector for $L$. To
complete the proof it suffices to show that the null vectors $\mathbf{c}$
and $\mathbf{w}$ are equal. Now the null space of $L$ is $1$-dimensional
since $c_{ij}\left( L\right) \neq 0$, and hence 
\begin{equation}
c_{i}w_{j}=c_{j}w_{i}\text{ for all }i,j.  \label{=cw}
\end{equation}

Note that $c_{j}$ and $w_{j}$ belong to the polynomial ring $\mathbb{C}\left[
a_{ij}\right] $. We claim that the polynomials $c_{j}$ are distinct and
irreducible. Consider first $c_{n}=\det \left( B\right) $ where $B=L_{nn}$
has entries%
\begin{equation*}
b_{ij}=\left\{ 
\begin{tabular}{cc}
$-a_{ij}$ & if $i\neq j$ \\ 
$a_{nj}+\sum_{k=1}^{n-1}a_{kj}$ & if $i=j$%
\end{tabular}%
\right. ;\quad \text{ for }1\leq i,j\leq n-1.
\end{equation*}%
This is an $\emph{invertible}$ $\mathbb{C}$-\emph{linear} map relating $%
\left\{ b_{ij}\right\} $ to the $\left( n-1\right) ^{2}$ variables 
\begin{equation*}
\left\{ a_{ij}\mid 1\leq i\leq n,1\leq j\leq n-1,i\neq j\right\} ,
\end{equation*}%
which occur in $c_{n}$. Thus the irreducibility of $c_{n}$ follows from the
irreducibility of the determinant as a polynomial in the matrix entries \cite%
[P. 176]{B}. The argument for the other $c_{i}$ is similar, and their
distinctness is obvious.

Since $c_{i}$ and $c_{j}$ are distinct and irreducible, we conclude from (%
\ref{=cw}) that $c_{i}$ divides $w_{i}$. Since $c_{i}$ and $w_{i}$ both have
total degree $n-1$, we conclude that $w_{i}=\alpha c_{i}$ for some $\alpha
\in \mathbb{C}$. To prove that $\alpha =1$, it suffices to note that the
monomial $m_{i}=\prod_{j\neq i}a_{ij}$ occurs in both $c_{i}$ and $w_{i}$
with coefficient $1$.
\end{proof}

\section{The all minors theorem}

The \emph{all minors} theorem \cite{C} is a formula for $\det L_{IJ}$, where 
$L_{IJ}$ is the submatrix of $L$ obtained by deleting rows $I$ and columns $%
J $. It turns out this follows from Theorem \ref{matrix tree} by a
specialization of variables. We will state and prove this below after a
brief discussion on signs of permutations and bijections.

\subsection{Interlude on signs}

Let $I,J$ be equal-sized subsets of $\left\{ 1,\ldots ,n\right\} $ and let $%
\Sigma _{I},\Sigma _{J}$ denote the sums of their elements. If $\beta
:J\rightarrow I$ is a bijection, we write $inv\left( \beta \right) $ for the
number of inversions in $\beta $, \textit{i.e.} pairs $j<j^{\prime }$ in $J$
such that $\beta \left( j\right) >\beta \left( j^{\prime }\right) $ and we
define 
\begin{equation*}
\varepsilon \left( \beta \right) =\left( -1\right) ^{inv\left( \beta \right)
+\Sigma _{I}+\Sigma _{J}}.
\end{equation*}%
Note that if $J=I$ then $\varepsilon \left( \sigma \right) =\left( -1\right)
^{inv\left( \sigma \right) }$ is the sign of $\sigma $ as a permutation.

\begin{lemma}
\label{hij}If $\beta :J\rightarrow I$, $\alpha :I\rightarrow H$ are
bijections then $\varepsilon \left( \alpha \beta \right) =\varepsilon \left(
\alpha \right) \varepsilon \left( \beta \right) $.
\end{lemma}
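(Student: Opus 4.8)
The plan is to reduce the claim to the classical multiplicativity of the sign of an ordinary permutation. First I would substitute the definitions and simplify. Writing $\beta :J\rightarrow I$ and $\alpha :I\rightarrow H$, the composite $\alpha \beta $ runs $J\rightarrow H$, so that
\begin{equation*}
\varepsilon \left( \alpha \right) \varepsilon \left( \beta \right) =\left(
-1\right) ^{inv\left( \alpha \right) +inv\left( \beta \right) +\Sigma
_{H}+2\Sigma _{I}+\Sigma _{J}}=\left( -1\right) ^{inv\left( \alpha \right)
+inv\left( \beta \right) +\Sigma _{H}+\Sigma _{J}},
\end{equation*}
since $2\Sigma _{I}$ is even, whereas $\varepsilon \left( \alpha \beta
\right) =\left( -1\right) ^{inv\left( \alpha \beta \right) +\Sigma
_{H}+\Sigma _{J}}$. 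Thus the set-sum contributions cancel, and it remains
only to prove the congruence $inv\left( \alpha \beta \right) \equiv
inv\left( \alpha \right) +inv\left( \beta \right) \pmod 2$.

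The key device is \emph{standardization}. For each of the equal-sized
subsets $S\in \left\{ H,I,J\right\} $ of $\left\{ 1,\ldots ,n\right\} $, let $
\sigma _{S}:\left\{ 1,\ldots ,k\right\} \rightarrow S$ denote the unique
order-preserving bijection, where $k=\left\vert S\right\vert $. Given $\beta
:J\rightarrow I$ I would set $\bar{\beta}=\sigma _{I}^{-1}\beta \sigma _{J}$,
which is a genuine permutation in the symmetric group $S_{k}$. Because $
\sigma _{I}$ and $\sigma _{J}$ are order-preserving, a pair $j<j^{\prime }$
in $J$ is an inversion of $\beta $ precisely when the corresponding pair of
indices is an inversion of $\bar{\beta}$; hence $inv\left( \beta \right)
=inv\left( \bar{\beta}\right) $, and likewise $inv\left( \alpha \right)
=inv\left( \bar{\alpha}\right) $ and $inv\left( \alpha \beta \right)
=inv\left( \overline{\alpha \beta }\right) $.

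The final step is to observe that standardization respects composition.
Inserting the identity $\sigma _{I}\sigma _{I}^{-1}$ on $I$ gives
\begin{equation*}
\overline{\alpha \beta }=\sigma _{H}^{-1}\alpha \beta \sigma _{J}=\left(
\sigma _{H}^{-1}\alpha \sigma _{I}\right) \left( \sigma _{I}^{-1}\beta \sigma
_{J}\right) =\bar{\alpha}\,\bar{\beta}.
\end{equation*}
Since the sign of a permutation is multiplicative on $S_{k}$ and $\left(
-1\right) ^{inv}$ is exactly that sign, we obtain $\left( -1\right)
^{inv\left( \alpha \beta \right) }=\left( -1\right) ^{inv\left( \bar{\alpha}
\bar{\beta}\right) }=\left( -1\right) ^{inv\left( \bar{\alpha}\right)
}\left( -1\right) ^{inv\left( \bar{\beta}\right) }=\left( -1\right)
^{inv\left( \alpha \right) +inv\left( \beta \right) }$, which is the required
congruence.

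The only genuinely delicate point is the identity $inv\left( \beta \right)
=inv\left( \bar{\beta}\right) $: it is conceptually obvious but needs care,
because inversions of $\beta $ are counted with respect to the native
orderings inherited from $I$ and $J$ rather than the orderings on $\left\{
1,\ldots ,k\right\} $. Everything else is a short formal manipulation, and
once standardization is in place the lemma is immediate from the classical
multiplicativity of the sign on a symmetric group.
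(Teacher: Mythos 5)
Your proof is correct and follows essentially the same route as the paper: both reduce the claim to the congruence $inv(\alpha\beta)\equiv inv(\alpha)+inv(\beta) \pmod 2$ and establish it by transporting $\alpha$ and $\beta$ to genuine permutations via the unique order-preserving bijections (the paper lands in permutations of $I$, you land in $S_k$, which is an immaterial difference). No gaps; the point you flag about $inv(\beta)=inv(\bar\beta)$ is exactly the step the paper also relies on when it says the replacement ``does not affect $inv(\alpha)$ \emph{etc.}''
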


\begin{proof}
This follows by combining the following mod $2$ congruences%
\begin{equation*}
\Sigma _{H}+\Sigma _{I}+\Sigma _{I}+\Sigma _{J}\equiv \Sigma _{H}+\Sigma _{J}%
\text{, }inv\left( \alpha \beta \right) \equiv inv\left( \alpha \right)
+inv\left( \beta \right) ,
\end{equation*}%
the first of which is obvious. To establish the second congruence we replace 
$\alpha ,\beta $ by the permutations $\lambda \alpha ,\beta \mu $ of $I$,
where $\lambda :H\rightarrow I,\mu :I\rightarrow J$ are the unique
order-preserving bijections; this does not affect $inv\left( \alpha \right) $
\textit{etc.,} and reduces the second congruence to a standard fact about
permutations.
\end{proof}

The meaning of $\varepsilon \left( \beta \right) $ is clarified by the
following result. For a bijection $\beta :J\rightarrow I$ and any $n\times n$
matrix $X$, let $X_{\beta }$ be the matrix obtained from $X$ by replacing,
for each $j\in J$, the $j$th column of $X$ by the unit vector $\mathbf{e}%
_{\beta \left( j\right) }.$

\begin{lemma}
\label{detXb}We have $\det X_{\beta }=\varepsilon \left( \beta \right) \det
X_{IJ}$.
\end{lemma}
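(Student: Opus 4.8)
The plan is to evaluate $\det X_{\beta }$ by a generalized Laplace expansion along the columns indexed by $J$. Set $k=\left\vert I\right\vert =\left\vert J\right\vert $. The key structural observation is that, by construction, the only nonzero entries in the $J$-columns of $X_{\beta }$ lie in the rows indexed by $I=\beta \left( J\right) $, since column $j$ is the unit vector $\mathbf{e}_{\beta \left( j\right) }$. Consequently, in the Laplace expansion of $\det X_{\beta }$ along the columns $J$, every term indexed by a $k$-subset $I^{\prime }$ of rows with $I^{\prime }\neq I$ vanishes, because such an $I^{\prime }$ forces an identically zero row into the corresponding $k\times k$ minor.

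Thus exactly one term survives, and it factors as
\begin{equation*}
\det X_{\beta }=\left( -1\right) ^{\Sigma _{I}+\Sigma _{J}}\det M\cdot \det X_{IJ},
\end{equation*}
where $M$ is the $k\times k$ submatrix of $X_{\beta }$ on rows $I$ and columns $J$, and the complementary minor, on the rows and columns not indexed by $I$ and $J$ respectively, is precisely $X_{IJ}$ (since $X_{\beta }$ agrees with $X$ off the columns $J$). Equivalently, one may reorder the rows so that those in $I$ come first and the columns so that those in $J$ come first; this produces a block-triangular matrix with diagonal blocks $M$ and $X_{IJ}$ and incurs exactly the sign $\left( -1\right) ^{\Sigma _{I}+\Sigma _{J}}$, the two order-preserving relabellings contributing $\Sigma _{I}$ and $\Sigma _{J}$ modulo $2$.

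It remains to identify $\det M$. Writing $I=\left\{ i_{1}<\cdots <i_{k}\right\} $ and $J=\left\{ j_{1}<\cdots <j_{k}\right\} $, the entry of $M$ in the row labelled $i_{a}$ and the column labelled $j_{b}$ is $\delta _{i_{a},\beta \left( j_{b}\right) }$, so $M$ is a permutation matrix. Its determinant is the sign of the permutation of $\left\{ 1,\ldots ,k\right\} $ obtained by conjugating $\beta $ by the order-preserving bijections onto $\left\{ 1,\ldots ,k\right\} $ --- the same device used in the proof of Lemma \ref{hij} --- and this sign equals $\left( -1\right) ^{inv\left( \beta \right) }$, since an order-preserving relabelling does not change the number of inversions. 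Substituting $\det M=\left( -1\right) ^{inv\left( \beta \right) }$ yields $\det X_{\beta }=\left( -1\right) ^{inv\left( \beta \right) +\Sigma _{I}+\Sigma _{J}}\det X_{IJ}=\varepsilon \left( \beta \right) \det X_{IJ}$, as claimed.

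The only point requiring care is the sign bookkeeping: one must fix a single convention for the generalized Laplace expansion and verify that its prefactor is $\left( -1\right) ^{\Sigma _{I}+\Sigma _{J}}$, and separately confirm that the permutation block contributes $\left( -1\right) ^{inv\left( \beta \right) }$ and not its reciprocal. Both checks are routine once the ordering conventions are pinned down, and no genuine difficulty arises beyond this accounting; in particular the vanishing of all but one Laplace term and the identification of the complementary minor with $X_{IJ}$ are immediate from the support structure of the unit-vector columns.
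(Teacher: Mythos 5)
Your proof is correct, but it takes a genuinely different route from the paper's. The paper first invokes Lemma \ref{hij} together with column swaps to reduce to the case $inv\left( \beta \right) =0$, i.e.\ to an order-preserving $\beta $, and then peels off the unit-vector columns one at a time by ordinary cofactor expansion along columns $j_{p},\ldots ,j_{1}$, each step contributing a factor $\left( -1\right) ^{i_{k}+j_{k}}$. You instead expand along all the columns of $J$ at once via the generalized Laplace expansion (equivalently, your block-triangularization after reordering rows and columns), so the inversion count is absorbed into the determinant of the $k\times k$ permutation block $M$ rather than being disposed of beforehand. Your argument is sound: for $I^{\prime }\neq I$ the minor on rows $I^{\prime }$ and columns $J$ does vanish (any $i^{\prime }\in I^{\prime }\setminus I$ gives a zero row, and any $j$ with $\beta \left( j\right) \notin I^{\prime }$ gives a zero column); the complementary minor is $X_{IJ}$ because $X_{\beta }$ agrees with $X$ off the columns $J$; the reordering sign is $\left( -1\right) ^{\Sigma _{I}+\Sigma _{J}}$ since the two shuffles contribute $\Sigma _{I}-k\left( k+1\right) /2$ and $\Sigma _{J}-k\left( k+1\right) /2$ inversions and their sum is congruent to $\Sigma _{I}+\Sigma _{J}$ mod $2$; and $\det M=\left( -1\right) ^{inv\left( \beta \right) }$ because order-preserving relabellings preserve inversions. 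What your approach buys is independence from Lemma \ref{hij} and a clean separation of the two sign contributions; what it costs is reliance on the generalized Laplace expansion and its sign convention, a heavier (though entirely standard) tool than the iterated single-column expansions the paper uses, and you rightly flag that pinning down that convention is the one point requiring care.
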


\begin{proof}
If $\sigma $ is a permutation of $I$ then by the previous lemma, and
standard properties of the determinant, we have 
\begin{equation*}
\varepsilon \left( \sigma \beta \right) =\varepsilon \left( \sigma \right)
\varepsilon \left( \beta \right) \text{, }\det \left( X_{\sigma \beta
}\right) =\varepsilon \left( \sigma \right) \det \left( X_{\beta }\right) 
\end{equation*}%
Thus replacing $\beta $ by a suitable $\sigma \beta $, we may assume $%
inv\left( \beta \right) =0$ and write 
\begin{equation*}
I=\left\{ i_{1}<\cdots <i_{p}\right\} ,J=\left\{ j_{1}<\cdots <j_{p}\right\} 
\text{ with }\beta \left( j_{k}\right) =i_{k}\text{ for all }k\text{.}
\end{equation*}%
The lemma now follows from the identity 
\begin{equation*}
\det \left( X_{\beta }\right) =\left( -1\right) ^{i_{p}+j_{p}}\cdots \left(
-1\right) ^{i_{1}+j_{1}}\det X_{IJ}=\left( -1\right) ^{\Sigma _{I}+\Sigma
_{J}}\det X_{IJ}
\end{equation*}%
obtained by iteratively expanding $\det \left( X_{\beta }\right) $ along
columns $j_{p},\ldots ,j_{1}$.
\end{proof}

\subsection{Directed forests}

Let $\mathcal{F}\left( J\right) $ be the set of all directed spanning
forests $f$ of $G$ with root set $J$. Let $\mathcal{F}\subset \mathcal{F}%
\left( J\right) $ be the subset consisting of those forests $f$ such that
each tree of $f$ contains a unique vertex of $I$. Note that the trees of $%
f\in \mathcal{F}$ give a bijection $\beta _{f}:$ $J\rightarrow I$. The all
minors theorem is the following formula \cite{C}.

\begin{theorem}
We have $\det \left( L_{IJ}\right) =\sum_{f\in \mathcal{F}}\varepsilon
\left( \beta _{f}\right) \mathrm{wt}\left( f\right) $.
\end{theorem}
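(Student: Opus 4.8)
The plan is to deduce the formula from the single-root matrix tree theorem (Theorem~\ref{matrix tree}) by an apex-vertex specialization, using the sign lemmas only to bookkeep $\varepsilon(\beta_f)$. First I would strip away the row/column mismatch with Lemma~\ref{detXb}: for any bijection $\beta:J\to I$ one has $\det L_{IJ}=\varepsilon(\beta)\det L_\beta$, where $L_\beta$ is obtained from $L$ by replacing, for each $j\in J$, the $j$th column by the unit vector $e_{\beta(j)}$. Taking $\beta$ order-preserving gives $\varepsilon(\beta)=(-1)^{\Sigma_I+\Sigma_J}$, and the theorem becomes the single determinant identity $\varepsilon(\beta)\det L_\beta=\sum_{f\in\mathcal F}\varepsilon(\beta_f)\,\mathrm{wt}(f)$.

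The engine for evaluating such determinants is the following specialization, which also settles the principal case $I=J$ outright. Adjoin an apex vertex $0$ to $G$ with an edge $0\to i$ of indeterminate weight $x_i$ for each $i$; the augmented Laplacian $\hat L$ then has $\hat L_{0,0}=L+\mathrm{diag}(x_1,\dots,x_n)$, while a directed spanning tree of the augmented graph rooted at $0$ is precisely a spanning forest $f$ of $G$ together with one apex edge to each of its roots. Applying Theorem~\ref{matrix tree} to the cofactor $c_{00}(\hat L)$ yields the generating identity $\det(L+\mathrm{diag}(x))=\sum_f\big(\prod_{r\in\mathrm{roots}(f)}x_r\big)\mathrm{wt}(f)$, the sum ranging over all spanning forests of $G$. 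Reading off the coefficient of $\prod_{j\in J}x_j$ gives $\det L_{JJ}=\sum_{\mathrm{roots}(f)=J}\mathrm{wt}(f)$; since for $I=J$ every tree of such an $f$ is rooted at its unique element of $J=I$, each $\beta_f$ is the identity and $\varepsilon(\beta_f)=1$, so this is exactly the theorem in the diagonal case.

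For general $I\neq J$ the unit columns $e_{\beta(j)}$ push $L_\beta$ outside the principal-block shape above, so I would instead expand $\det L_\beta$ by the permutation formula. Only permutations $\sigma$ with $\sigma|_J=\beta$ survive, and expanding each remaining Laplacian column $j\notin J$ by splitting its diagonal entry $d_j=\sum_k a_{kj}$ realizes every monomial as a digraph on $[n]$ in which each vertex outside $J$ selects one incoming edge. The standard matrix-tree cancellation — pairing any configuration that contains a nontrivial directed cycle with the configuration obtained by toggling that cycle's distinguished diagonal choice — is sign-reversing and annihilates all terms except the spanning forests $f$ with root set $J$ in which each tree carries exactly one vertex of $I$, that is, exactly the members of $\mathcal F$, the accompanying correspondence being $\beta_f$. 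It then remains to check that the surviving sign $\mathrm{sgn}(\sigma)$ equals $\varepsilon(\beta)\varepsilon(\beta_f)$; this is where Lemma~\ref{hij} enters, reducing the computation to the multiplicativity $\varepsilon(\alpha\beta)=\varepsilon(\alpha)\varepsilon(\beta)$ applied to $\beta$ and $\beta_f^{-1}$. Since $\varepsilon(\beta)^2=1$, multiplying through by $\varepsilon(\beta)$ turns $\det L_\beta$ into $\sum_{f\in\mathcal F}\varepsilon(\beta_f)\mathrm{wt}(f)$, as desired.

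The main obstacle is precisely this last step for $I\neq J$: verifying that the cancellation is compatible with the frozen columns indexed by $J$ (so that no surviving forest is lost and no spurious term remains), and that the residual sign is exactly $\varepsilon(\beta_f)$ rather than merely $\pm\varepsilon(\beta_f)$. Everything structural — the apex-vertex specialization, the forest generating identity, and the coefficient extraction that delivers the principal case — is routine once Theorem~\ref{matrix tree} is in hand; the genuine content is the sign accounting, which the two interludes on signs (Lemmas~\ref{hij} and~\ref{detXb}) are evidently designed to supply.
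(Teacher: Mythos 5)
Your reduction via Lemma~\ref{detXb} to a statement about $\det L_{\beta}$ matches the paper, and your apex-vertex argument for the principal case $I=J$ is correct and self-contained: adjoining a root $0$ with edge weights $x_{i}$, applying Theorem~\ref{matrix tree} to $c_{00}(\hat{L})$, and extracting the coefficient of $\prod_{j\in J}x_{j}$ does yield $\det L_{JJ}=\sum_{\mathrm{roots}(f)=J}\mathrm{wt}(f)$, and in that case every $\beta_{f}$ is the identity. But the principal case is not where the difficulty lies, and for $I\neq J$ your proposal has a genuine gap: you abandon the strategy of deducing the result from Theorem~\ref{matrix tree} and instead invoke ``the standard matrix-tree cancellation'' on the permutation expansion of $\det L_{\beta}$, while explicitly conceding that you have not verified either that the sign-reversing involution is compatible with the frozen columns indexed by $J$ or that the surviving sign is exactly $\varepsilon(\beta_{f})$. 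That verification \emph{is} the theorem --- it is essentially Chaiken's original combinatorial proof --- so as written the argument does not close in precisely the case the theorem is about. Note also that Lemma~\ref{hij} alone cannot supply the missing sign: the surviving $\sigma$ is a permutation of all of $\{1,\dots,n\}$, and relating $\mathrm{sgn}(\sigma)$ to $\varepsilon(\beta_{f})$ requires an analysis of how the cycles of $\sigma$ interact with the trees of $f$, which is exactly the bookkeeping you defer.

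The idea you are missing is the paper's weight specialization. Fix $j_{0}\in J$, set $i_{0}=\beta(j_{0})$ and $J_{0}=J\setminus\{j_{0}\}$, and specialize the edge weights in columns $j\in J_{0}$ to $\bar{a}_{i_{0}j}=1$, $\bar{a}_{\beta(j)j}=-1$, and $0$ otherwise. The resulting matrix $\bar{L}$ is still a genuine Laplacian (with some weights negative, which is harmless since Theorem~\ref{matrix tree} holds for arbitrary weights), and it agrees with $L_{\beta}$ outside row $i_{0}$ and column $j_{0}$, so $\det L_{\beta}=c_{i_{0}j_{0}}(\bar{L})$. Theorem~\ref{matrix tree} then expresses this as a signed sum over $j_{0}$-trees; grouping the trees by the forest obtained on deleting the edges into $J_{0}$, a sign-reversing involution kills the forests outside $\mathcal{F}$, and for $f\in\mathcal{F}$ a cycle-by-cycle binomial identity produces exactly $\varepsilon(\sigma_{f})=\varepsilon(\beta)\varepsilon(\beta_{f})$. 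This is how the single-root theorem is leveraged in the general case; without it (or a full execution of Chaiken's involution), your proof of the $I\neq J$ case is incomplete.
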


We fix a bijection $\beta :J\rightarrow I$ and define $\sigma _{f}=\beta
^{-1}\beta _{f}:J\rightarrow J$. In view of Lemmas \ref{hij} and \ref{detXb}%
, it suffices to prove the following reformulation of the previous theorem.

\begin{theorem}
We have $\det L_{\beta }=\sum_{f\in \mathcal{F}}\varepsilon \left( \sigma
_{f}\right) \mathrm{wt}\left( f\right) .$
\end{theorem}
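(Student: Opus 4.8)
The plan is to establish the displayed reformulation directly, since Lemmas \ref{hij} and \ref{detXb} have already reduced the original all--minors statement to it. Writing $K=\{1,\dots,n\}\setminus J$ and $K'=\{1,\dots,n\}\setminus I$, I would begin from the permutation expansion $\det L_{\beta}=\sum_{\pi\in S_{n}}\operatorname{sgn}(\pi)\prod_{c}(L_{\beta})_{\pi(c),c}$. For each $j\in J$ the $j$th column of $L_{\beta}$ is the unit vector $\mathbf{e}_{\beta(j)}$, so a term survives only when $\pi(j)=\beta(j)$; hence every contributing $\pi$ restricts to $\beta$ on $J$ and carries $K$ bijectively onto $K'$, giving
\[
\det L_{\beta}=\sum_{\pi|_{J}=\beta}\operatorname{sgn}(\pi)\prod_{k\in K}L_{\pi(k),k}.
\]

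Next I would open up the remaining factors. For $k\in K$ one has $L_{\pi(k),k}=-a_{\pi(k),k}$ when $\pi(k)\neq k$ and $L_{kk}=\sum_{m}a_{mk}$ when $\pi(k)=k$; expanding the diagonal sums turns the whole expression into a sum over \emph{parent assignments} $\phi\colon K\to\{1,\dots,n\}$, each weighted by $\pm\prod_{k\in K}a_{\phi(k),k}$. Such a $\phi$, together with the convention that the vertices of $J$ have no parent, is exactly a spanning subgraph in which every vertex of $K$ has in--degree one and every vertex of $J$ is a source; by a component count the acyclic ones are precisely the spanning forests with root set $J$, and on these the product of weights is $\mathrm{wt}(f)$.

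The heart of the argument is then a sign--reversing involution, in the spirit of the classical reasoning underlying Theorem \ref{matrix tree}: toggling a canonically chosen directed cycle of the parent graph between being ``traced by $\pi$'' and being a block of fixed points reverses the sign, so all subgraphs containing a cycle cancel and only the forests remain. It remains to identify the survivors with their signs: because $\pi$ must map $K$ \emph{bijectively} onto $K'$, each tree of a surviving forest is forced to contain exactly one vertex of $I$, so $f\in\mathcal{F}$ and its trees realize the bijection $\beta_{f}$, while the residual sign of the associated $\pi$ is exactly $\varepsilon(\sigma_{f})$ by Lemmas \ref{hij} and \ref{detXb}. Each $f\in\mathcal{F}$ therefore occurs once, with coefficient $\varepsilon(\sigma_{f})\,\mathrm{wt}(f)$, which is the claim.

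I expect the sign analysis to be the main obstacle, and it is genuinely harder than in the principal case $I=J$. There the matrix--tree theorem can be recovered by the clean ``specialization'' of adjoining a super--root joined to the vertices of $J$ and reading off a single cofactor; but when $I\neq J$ the vertices of $I\setminus J$ can never be fixed by $\pi$, so the involution must be confined to cycles supported on $K\cap K'$ and the leftover permutation of the ``defect'' vertices must be shown to contribute precisely $\varepsilon(\sigma_{f})$. Controlling this residual sign is exactly the role of the multiplicativity in Lemma \ref{hij} and the column--expansion identity in Lemma \ref{detXb}, and keeping the bijection $\beta_{f}$ and the permutation $\sigma_{f}=\beta^{-1}\beta_{f}$ aligned throughout is the delicate bookkeeping the proof must get right.
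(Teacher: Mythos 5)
Your route is genuinely different from the paper's. The paper never expands $\det L_{\beta}$ as a sum over permutations: it fixes $j_{0}\in J$, specializes the weights $a_{ij}$ for $j\in J\setminus\{j_{0}\}$ to $\pm 1$ so that $\det L_{\beta}$ becomes a single cofactor $c_{i_{0}j_{0}}(\bar{L})$ of an honest Laplacian, invokes the already-proven Theorem \ref{matrix tree} to write that cofactor as a signed sum over $j_{0}$-trees, and then collapses trees onto their underlying forests, using a sign-reversing involution to kill $f\notin\mathcal{F}$ and a binomial identity to produce $\varepsilon(\sigma_{f})$ for $f\in\mathcal{F}$. Your plan --- direct permutation expansion, parent assignments, and an involution on cycles --- is essentially Chaiken's original strategy and can be made to work, but as written it has concrete gaps precisely at the points you flag as delicate.

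First, the cancellation is not fully delivered by the involution you describe. A cycle of the parent graph through a vertex $v\in I\setminus J$ can neither be traced by $\pi$ nor consist of fixed points (since $\pi$ carries $K$ onto $K'$ and $v\notin K'$); you correctly confine the involution to cycles in $K\cap K'$, but then you owe an argument for why configurations whose cycles meet $I\setminus J$ vanish. (They do, but for a different reason: chasing such a cycle forces two distinct vertices to share a $\pi$-image, so no compatible permutation exists at all.) Similarly, acyclic survivors $f\notin\mathcal{F}$ --- forests in which some tree avoids $I$ --- must be shown to contribute $0$; the reason is that the forced $\pi$-cycle through the root of such a tree can never close up, which is a statement requiring proof, not a consequence of the bare fact that $\pi$ maps $K$ bijectively onto $K'$. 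Second, the residual sign is not supplied by Lemmas \ref{hij} and \ref{detXb}: those only reduce the all-minors statement to the $L_{\beta}$ form and play no role inside the expansion. For $f\in\mathcal{F}$ you must show there is a unique compatible $\pi$, whose nontrivial cycles are the cycles of $\sigma_{f}$ with the tree-path from each $\beta(j)$ up to its root spliced in, and then check that a cycle of $\sigma_{f}$ of length $m$ carrying $r$ extra $K$-vertices contributes $(-1)^{m+r-1}$ to $\mathrm{sgn}(\pi)$ and $(-1)^{r}$ from the off-diagonal entries, for a net $(-1)^{m-1}$ per cycle and hence $\varepsilon(\sigma_{f})$ overall. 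Until these three points are filled in, what you have is a sound outline rather than a proof; the paper's specialization trick is designed exactly to avoid this bookkeeping by pushing all of it into Theorem \ref{matrix tree}.
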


\begin{proof}
As usual it is enough to treat the complete digraph $G_{n}$ with arbitrary
edge weights $a_{ij}$. We fix an index $j_{0}\in J$ and put $i_{0}=\beta
\left( j_{0}\right) $, $J_{0}=J\setminus \left\{ j_{0}\right\} $. We now
consider a particular specialization $\bar{a}_{ij}$ of $a_{ij}$, and the
entries $\bar{l}_{ij}$ of the specialized Laplacian $\bar{L}$. For $j\notin
J_{0}$ we set $\bar{a}_{ij}=a_{ij}$ and hence $\bar{l}_{ij}=a_{ij}$; while
for $j\in J_{0}$ we set 
\begin{equation}
\bar{a}_{ij}=\left\{ 
\begin{tabular}{cc}
$1$ & if $i=i_{0}$ \\ 
$-1$ & if $i=\beta \left( j\right) $ \\ 
$0$ & otherwise%
\end{tabular}%
\right. \implies \bar{l}_{ij}=\left\{ 
\begin{tabular}{cc}
$-1$ & if $i=i_{0}$ \\ 
$1$ & if $i=\beta \left( j\right) $ \\ 
$0$ & otherwise%
\end{tabular}%
\right.  \label{=sp}
\end{equation}%
Note that $\bar{L}$ and $L_{\beta }$ have the same entries outside of row $%
i_{0}$ and column $j_{0}$; hence we get $\det L_{\beta
}=c_{i_{0}j_{0}}\left( L_{\beta }\right) =c_{i_{0}j_{0}}\left( \bar{L}%
\right) $ and it remains to show that 
\begin{equation}
c_{i_{0}j_{0}}\left( \bar{L}\right) \overset{?}{=}\sum\nolimits_{f\in 
\mathcal{F}}\varepsilon \left( \sigma _{f}\right) \mathrm{wt}\left( f\right) 
\text{.}  \label{=cbar}
\end{equation}

Specializing Theorem \ref{matrix tree} we get 
\begin{equation*}
c_{i_{0}j_{0}}\left( \bar{L}\right) =\sum\nolimits_{f\in \mathcal{F}\left(
J\right) }\psi \left( f\right) \mathrm{wt}\left( f\right) \,\text{, }\psi
\left( f\right) :=\sum\nolimits_{t\in \mathcal{A}_{f}}\left( -1\right)
^{p\left( t\right) },
\end{equation*}%
where $\mathcal{A}_{f}$ is the set of $j_{0}$-trees $t$ such that for each $%
j\in J_{0}$ the unique edge $ij$ in $t$ satisfies $i=i_{0}$ or $i=\beta
\left( j\right) $, and for which deleting all such edges from $t$ yields the
forest $f$; and $p\left( t\right) $ is the number of edges in $t$ of type $%
i_{0}j$, $j\in J_{0}$. Therefore to prove (\ref{=cbar}) it suffices to show%
\begin{equation*}
\psi \left( f\right) \overset{?}{=}\left\{ 
\begin{tabular}{cc}
$0$ & $\text{if }f\notin \mathcal{F}$ \\ 
$\varepsilon \left( \sigma _{f}\right) $ & $\text{if }f\in \mathcal{F}$%
\end{tabular}%
\right. \text{.}
\end{equation*}

First suppose $f\notin \mathcal{F}$. In this case if $t\in \mathcal{A}_{f}$
then there is some $j\in J_{0}$ such that the $j$-subtree contains no $I$
vertex. Choose the largest such $j$ and change the edge $ij$, from $i=i_{0}$
to $i=\beta \left( j\right) $ or vice versa. This is a sign-reversing
involution on $\mathcal{A}_{f}$ and hence we get $\psi \left( f\right) =0$.

Now let $f\in \mathcal{F}$, and for each subset $S\subset J_{0}$ consider
the graph obtained from $f$ by adding the edges $i_{0}j$ for $j\in S$, and $%
\beta \left( j\right) j\ $for $j\in J_{0}\setminus S$. This graph is a tree
in $\mathcal{A}_{f}$ iff $S$ meets every cycle $c$ of the permutation $%
\sigma _{f}$ of $J$, and is disconnected otherwise. Thus a tree $t\in 
\mathcal{A}_{f}$ is prescribed uniquely by choosing, for each cycle $c$ of $%
\sigma _{f}$, a nonempty subset $S_{c}$ of its vertex set $J_{c}$. By
definition we have $\left( -1\right) ^{p\left( t\right)
}=\prod\nolimits_{c}\left( -1\right) ^{\left\vert J_{c}\right\vert
-\left\vert S_{c}\right\vert }$, and so $\psi \left( f\right) $ factors as 
\begin{equation*}
\psi \left( f\right) =\prod\nolimits_{c}\psi \left( c\right) \text{, }\psi
\left( c\right) :=\sum\nolimits_{J_{c}\supseteq S_{c}\neq \emptyset }\left(
-1\right) ^{\left\vert J_{c}\right\vert -\left\vert S_{c}\right\vert }.
\end{equation*}%
Now we get $\psi \left( c\right) =\left( -1\right) ^{\left\vert
J_{c}\right\vert -1}$ using the elementary identity 
\begin{equation*}
\sum\nolimits_{k=1}^{m}\binom{m}{k}\left( -1\right) ^{m-k}=\left( 1-1\right)
^{m}-\left( -1\right) ^{m}=\left( -1\right) ^{m-1}\text{.}
\end{equation*}%
Thus $\psi \left( f\right) $ agrees with the standard formula $%
\prod_{c}\left( -1\right) ^{\left\vert J_{c}\right\vert -1}$ for $%
\varepsilon \left( \sigma _{f}\right) $.
\end{proof}

\end{document}